\newtheorem{thm}{Theorem}[section]
\newtheorem{cor}{Corollary}[section]
\theoremstyle{definition}
\theoremstyle{remark}
\numberwithin{equation}{section}
\def\cE{{\mathcal E}}
\def\cH{{\mathcal H}}
\def\cN{{\mathcal N}}
\def\cS{{\mathcal S}}
\def\cV{{\mathcal V}}
\def\cW{{\mathcal W}}
\def\gg{{\mathfrak g}}
\def\gl{{\mathfrak l}}
\def\go{{\mathfrak o}}
\def\gp{{\mathfrak p}}
\def\gs{{\mathfrak s}}
\newfont{\german}{eufm10}
\begin{document}
\pagestyle{plain}

\title
{Classical freeness of orthosymplectic affine vertex superalgebras}

\author{Thomas Creutzig}
\address{Department Mathematik, FAU Erlangen, Cauerstrasse 11, 91058 Erlangen, Germany}
\email{creutzigt@math.fau.de}
\thanks{T. Creutzig is supported by NSERC Discovery Grant \#RES0048511.}

\author{Andrew R. Linshaw}
\address{Department of Mathematics, University of Denver, C. M. Knudson Hall, 2390 S. York St. Denver, CO 80210}
\email{andrew.linshaw@du.edu}
\thanks{A. Linshaw is supported by NSF Grant DMS-2001484.}

\author{Bailin Song}
\address{School of Mathematical Sciences, University of Science and Technology of China, Jinzhai Road 96, Hefei 230026, Anhui, P. R. China}
\email{bailinso@ustc.edu.cn}
\thanks{B. Song is supported by  NSFC No. 12171447}

\thanks{We are grateful to Shashank Kanade for raising the question of whether the classical freeness of $L_n(\go\gs\gp_{1|2r})$ could be proven using existing technology.}


{\abstract \noindent The question of when a vertex algebra is a quantization of the arc space of its associated scheme has recently received a lot of attention in both the mathematics and physics literature. This property was first studied by Tomoyuki Arakawa and Anne Moreau [Lectures on $\cW$-algebras, Australian Representation Theory Workshop 2016, University of Melbourne], and was given the name \lq\lq classical freeness" by Jethro van Ekeren and Reimundo Heluani in their work on chiral homology [Comm. Math. Phys. 386 (2021), no. 1, 495-550]. Later, it was extended to vertex superalgebras by Hao Li [Eur. J. Math. 7 (2021), 1689-1728]. In this note, we prove the classical freeness of the simple affine vertex superalgebra $L_n(\go\gs\gp_{m|2r})$ for all positive integers $m,n,r$ satisfying $-\frac{m}{2} + r +n+1 > 0$. In particular, it holds for the rational vertex superalgebras $L_n(\go\gs\gp_{1|2r})$ for all positive integers $r,n$.}

\keywords{vertex algebra; coset construction; associated scheme; classical freeness; arc space}
\maketitle
\section{Introduction}
We briefly recall the notion of classical freeness of a vertex algebra. First, for any vertex algebra $\cV$, we have {\it Zhu's commutative algebra} $R_{\cV}$ \cite{Z}. It is defined as the vector space quotient of $\cV$ by the span of all elements of the form $a_{(-2)} b$ for all $a,b \in \cV$. Next, there is a canonical decreasing filtration $F^{\bullet} \cV$, such that the associated graded algebra $\text{gr}^F(\cV)$ is a differential graded commutative ring \cite{Li}. In fact, $R_{\cV}$ can be identified with the zeroth graded component of $\text{gr}^F(\cV)$, and $R_{\cV}$ generates $\text{gr}^F(\cV)$ as a differential algebra. If $\cV$ is freely generated by a set of fields $\{\alpha_i\}$, then $\text{gr}^F(\cV)$ is just the differential polynomial algebra generated by $\{\alpha_i\}$. However, if $\cV$ is not freely generated, it is an interesting problem to find all differential algebraic relations in $\text{gr}^F(\cV)$.

Following Arakawa \cite{Ar2,Ar3}, the {\it associated scheme} of $\cV$ is defined to be $X_{\cV} = \text{Spec} \ R_{\cV}$, which is an affine Poisson scheme. The {\it singular support} of $\cV$ is defined to be
$$\text{SS}(\cV) = \text{Spec} \ \text{gr}^F(\cV),$$ 
which is a vertex Poisson scheme. Let $(R_{\cV})_{\infty}$ denote the affine coordinate ring of the arc space $J_{\infty}(X_{\cV})$. By its universal property, there is a surjective homomorphism of differential rings
\begin{equation} \label{eq:map} \Phi: (R_{\cV})_{\infty} \rightarrow \text{gr}^F(\cV).\end{equation} Equivalently, there is a closed embedding $\text{SS}(\cV)\hookrightarrow J_{\infty}(X_{\cV})$.
The question of when \eqref{eq:map} is an isomorphism (i.e. $\cV$ is a quantization of $J_{\infty}(X_{\cV})$), was first considered by Arakawa and Moreau \cite{AM1,AM2}. They also considered the weaker question of when $\cV$ is a quantization of the arc space of the associated {\it variety} of $\cV$ (i.e., \eqref{eq:map} induces an isomorphism of reduced rings), and proved this when $\cV$ is simple and quasi-lisse. In \cite{EH1}, van Ekeren and Heluani call $\cV$ called {\it classically free} if \eqref{eq:map} is an isomorphism, and this property plays an important role in their computations of chiral homology in \cite{EH1,EH2}. It is easy to see that any freely generated vertex algebra is classically free; this includes all free field algebras, universal affine vertex algebras, and universal $\cW$-algebras. If $\cV$ is not freely generated, the phenomenon is much more subtle. Heluani and van Ekeren showed in \cite{EH1} that the Virasoro minimal models $\text{Vir}_{p,q}$ for coprime positive integers $p<q$ are classically free if and only if $p = 2$. They also showed that $L_k(\gs\gl_2)$ is classically free for $k \in \mathbb{N}$. The proofs of these results are quite intricate, but recently the second and third authors have used the invariant theory of arc spaces to prove that $L_k(\gs\gp_{2n})$ is classically free for all positive integers $k,n$ \cite{LS1}. This generalizes and provides a conceptual proof of van Ekeren and Heluani's result in the case $n=1$.

Recently in \cite{L}, Hao Li has extended the notion of classical freeness in a natural way to vertex superalgebras $\cV$. First, for any supercommutative algebra $A$, $A_{\infty}$ is defined by freely adjoining a differential, i.e. it is the initial object in the category of differential superalgebas generated by $A$. Classical freeness of $\cV$ then means that the map $(R_{\cV})_{\infty} \rightarrow \text{gr}^F(\cV)$ is an isomorphism of differential superalgebras. Li proved that the $\cN=1$ superconformal minimal models $L^{\cN=1}_{p,q}$ are classically free if and only if $(p,q) = (2,4k)$ for $k \in \mathbb{N}$, which is analogous to the above result for $\text{Vir}_{p,q}$. In \cite{L} and in joint work with Milas \cite{LM}, Li gave many additional examples of classically free vertex (super)algebras, but most of these involve principal subspaces and are not simple. In this paper, we use the invariant theory of arc spaces to give the first examples of simple affine vertex superalgebras at nongeneric levels which are classically free.

At present, classical freeness is still quite mysterious. There is no clear intuition for why a given vertex algebra should be classically free, and there are also no good techniques for answering the question in a general setting. As is often the case, a hint might be provided by physics. The Virasoro minimal models that are classical free are precisely those that appear in the physics of Argyres-Douglas theories \cite{Ra}, and physicists expect that vertex algebras that come from four-dimensional physics are classical free; for example, Leonardo Rastelli has expressed this expectation at  the workshop on Representation theory, Vertex and Chiral Algebras in Rio de Janeiro in March 2022.  Other examples of chiral algebras of Argyres-Douglas theories are the $\mathcal B_p$-algebras  and the $\mathcal R^{(p)}$-algebras  \cite{C, CRW, ACGY} and there are many more examples beyond these Argyres-Douglas theories, see e.g. \cite{BM, BMPR}. The $\mathcal B_2$-algebra is nothing but a single $\beta\gamma$-system and it is classical free as a freely generated vertex algebra. 
All these algebras have in common that they allow for nice free field realizations, and it might be possible to exploit this in order to prove classical freeness. We consider it a key problem to prove (or disprove) the classical freeness of these vertex algebras coming from higher dimensional physics.

\section{Background}
We will use the same notation and conventions as the paper \cite{LS1} of the second and third authors.

\subsection{Affine vertex algebras} Let $\gg$ be a simple, finite-dimensional Lie (super)algebra. The {\it universal affine vertex algebra} $V^k(\gg)$ is freely generated by fields $X^{\xi}$ which are linear in $\xi \in \gg$ and satisfy 
\begin{equation}
X^{\xi}(z) X^{\eta}(w) \sim k ( \xi, \eta) (z-w)^{-2} + X^{[\xi,\eta]}(w)(z-w)^{-1}.
\end{equation}
Here $(\cdot ,\cdot )$ denotes the normalized Killing form $\frac{1}{2h^{\vee}} \langle \cdot,\cdot \rangle$. For all $k\neq -h^{\vee}$, $V^k(\gg)$ has the Sugawara Virasoro vector
\begin{equation} \label{sugawara} L^{\gg}  = \frac{1}{2(k+h^{\vee})} \sum_{i=1}^n :X^{\xi_i} X^{\xi'_i}: \end{equation} of central charge $c = \frac{k\ \text{sdim}(\gg)}{k+h^{\vee}}$. Here $\xi_i$ runs over a basis of $\gg$, and $\xi'_i$ is the dual basis with respect to $(\cdot,\cdot)$.

As a module over $\widehat{\gg} = \gg[t,t^{-1}] \oplus \mathbb{C}$, $V^k(\gg)$ is isomorphic to the vacuum $\widehat{\gg}$-module. For generic $k$, $V^k(\gg)$ is a simple vertex algebra, but for certain rational values of $k \geq -h^{\vee}$, $V^k(\gg)$ is not simple, and we denote by $L_k(\gg)$ its simple graded quotient.

We shall adopt the following convention in the case $\gg = \go\gs\gp_{m|2n}$. We take the dual Coxeter number to be $$h^{\vee} = \frac{2n+2-m}{2}.$$ The bilinear form on $\go\gs\gp_{m|2n}$ is then normalized so that it restricts to the usual bilinear form on $\gs\gp_{2n}$, and we have the embedding \begin{equation} \label{osp:convention} V^{k}(\gs\gp_{2n}) \otimes V^{-2k}(\gs\go_m) \rightarrow V^k(\go\gs\gp_{m|2n}).\end{equation}

\subsection{$\beta\gamma$ and $bc$-systems} 
The rank $n$ $\beta\gamma$-system $\cS(n)$ is freely generated by even fields $\beta^1,\dots, \beta^n$ and $\gamma^1,\dots, \gamma^n$, satisfying
 \begin{equation} \label{eq:betagammaope} \begin{split} \beta^i(z)\gamma^{j}(w) &\sim \delta_{i,j} (z-w)^{-1},\quad \gamma^{i}(z)\beta^j(w)\sim -\delta_{i,j} (z-w)^{-1},\\  \beta^i(z)\beta^j(w) &\sim 0,\qquad\qquad\qquad \gamma^i(z)\gamma^j (w)\sim 0.\end{split} \end{equation}  It has Virasoro element $L^{\cS} = \frac{1}{2} \sum_{i=1}^n \big(:\beta^{i}\partial\gamma^{i}: - :\partial\beta^{i}\gamma^{i}:\big)$ of central charge $-n$, under which $\beta^{i}$, $\gamma^{i}$ are primary of weight $\frac{1}{2}$. The symplectic group $Sp_{2n}$ is the full automorphism group of $\cS(n)$ preserving $L^{\cS}$. In fact, there is a homomorphism $L_{-1/2}(\gs\gp_{2n}) \rightarrow \cS(n)$ whose zero modes infinitesimally generate the action of $Sp_{2n}$. 

Similarly, the rank $n$ $bc$-system $\cE(n)$ is freely generated by odd fields $b^1,\dots, b^n$ and $c^1,\dots, c^n$, satisfying
\begin{equation} \label{eq:betagammaope} \begin{split} b^i(z) c^{j}(w) &\sim \delta_{i,j} (z-w)^{-1},\quad c^{i}(z) b^j(w)\sim \delta_{i,j} (z-w)^{-1},\\  b^i(z) b^j(w) &\sim 0,\qquad\qquad\qquad c^i(z)c^j (w)\sim 0.\end{split} \end{equation} It has Virasoro element 
$L^{\cE}= \frac{1}{2} \sum_{i=1}^n \big(-:b^{i}\partial c^{i}: + :\partial b^{i} c^{i}:\big)$
 of central charge $n$, under which $b^{i}$, $c^{i}$ are primary of weight $\frac{1}{2}$. The orthogonal group $O_{2n}$ is the full automorphism group of $\cE(n)$ preserving $L^{\cE}$, and there is a homomorphism $L_{1}(\gs\go_{2n}) \rightarrow \cE(n)$ which infinitesimally generates the action of $O_{2n}$.

Next, we recall certain well-known affine algebra actions on these free field algebras which appear as \cite[Eq. 2.28-2.29]{LS1}. 
\begin{equation} \label{s1} V^{-m+r}(\gg\gl_n) \otimes V^n (\gs\gl_{r|m}) \rightarrow \cS(nm) \otimes \cE(nr),\end{equation}
\begin{equation} \label{s2} V^{-\frac{m}{2}+r}(\gs\gp_{2n}) \otimes V^{n}(\go\gs\gp_{m|2r}) \rightarrow \cS(nm) \otimes \cE(2nr),\end{equation}
The image of $V^{-m+r}(\gg\gl_n) \otimes V^n (\gs\gl_{r|m})$ in $\cS(nm) \otimes \cE(nr)$ under \eqref{s1}, which we denote by $\tilde{V}^{-m+r}(\gg\gl_n) \otimes \tilde{V}^n(\gs\gl_{r|m})$, is conformally embedded. Similarly, the image $\tilde{V}^{-\frac{m}{2}+r}(\gs\gp_{2n}) \otimes \tilde{V}^{n}(\go\gs\gp_{m|2r})$ of $V^{-\frac{m}{2}+r}(\gs\gp_{2n}) \otimes V^{n}(\go\gs\gp_{m|2r})$ under \eqref{s2} is conformally embedded.

The following coset vertex algebras were studied in \cite{LS1}:
\begin{equation} \begin{split} & \text{Com}(\tilde{V}^{-m+r}(\gg\gl_n), \cS(nm) \otimes \cE(nr)),
\\ & \text{Com}(\tilde{V}^{-\frac{m}{2}+r}(\gs\gp_{2n}), \cS(nm) \otimes \cE(2nr)).\end{split} \end{equation}
Using the arc space analogues of Weyl's first and second fundamental theorems of invariant theory for $\gs\gl_n$ and $\gs\gp_{2n}$, the following result was proved for all positive integers $n,m,r$:
\begin{enumerate}
\item $\text{Com}(\tilde{V}^{-m+r}(\gg\gl_n), \cS(nm) \otimes \cE(nr)) \cong \tilde{V}^n(\gs\gl_{r|m})$ (\cite[Thm. 5.6]{LS1}),
\item $\text{Com}(\tilde{V}^{-\frac{m}{2}+r}(\gs\gp_{2n}), \cS(nm) \otimes \cE(2nr)) \cong \tilde{V}^{n}(\go\gs\gp_{m|2r})$ (\cite[Thm. 6.3]{LS1}).
\end{enumerate}

In \cite{LS1}, the question of when these cosets were simple was not addressed, but in some cases this can be answered using the following recent result of Arakawa, Kawasetsu, and the first author \cite{ACK}:

\begin{thm}\label{cosetsimplicity} 
Let $\cV$ be a simple vertex (super)algebra which is $\mathbb{N}$-graded by conformal weight, with one-dimensional weight zero space. Suppose that $\cV$ admits an embedding $\tilde{V}^k(\gg) \rightarrow \cV$, where $\tilde{V}^k(\gg)$ is a homomorphic image of $V^k(\gg)$, and $\gg$ is either a simple Lie algebra or $\gg=\mathfrak{osp}_{1|2n}$. If $k+h^\vee \not\in \mathbb{Q}_{\leq 0}$, then 
$\text{Com}(\tilde{V}^k(\gg), \cV)$ is simple.
\end{thm}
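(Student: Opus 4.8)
The plan is to deduce the simplicity of the coset from the nondegeneracy of an invariant bilinear form, using the level hypothesis only to pin down the kernel of the Sugawara operator. Write $A := \tilde{V}^k(\gg)$ and $\cC := \text{Com}(A,\cV)$, let $L^{\gg}$ be the Sugawara vector of $A$, and let $L^{\cV}$ be the conformal vector of $\cV$. Since $\cV$ is simple and of CFT type ($\mathbb{N}$-graded with $\cV_0 = \mathbb{C}\mathbf{1}$), I would first invoke the standard fact that the radical of a symmetric invariant bilinear form on such a vertex algebra is its unique maximal proper ideal, so that simplicity of $\cV$ forces the existence of a nondegenerate invariant form $\langle\cdot,\cdot\rangle$. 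This form is contravariant for the affine action, so $L^{\gg}_0$ is self-adjoint and commutes with $L^{\cV}_0$; consequently, on each finite-dimensional weight space $\cV_d$ the form is the orthogonal direct sum of the generalized $L^{\gg}_0$-eigenspaces, and it restricts nondegenerately to each of them. Since $\cC_0 = \cV_0 \cap \cC = \mathbb{C}\mathbf{1}$ and $\cC$ inherits the $\mathbb{N}$-grading, it then suffices, by the same criterion applied to $\cC$, to identify $\cC$ with a single such eigenspace.

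The heart of the argument is the claim that $\cC = \ker L^{\gg}_0$. The inclusion $\cC \subseteq \ker L^{\gg}_0$ is immediate, since an element of $\cC$ is a $\gg$-invariant vector annihilated by all nonnegative current modes, on which the normally ordered Sugawara operator acts by zero. For the reverse inclusion I would use the structure of $\cV$ as a smooth, conformally bounded $\hat\gg$-module: the $L^{\gg}_0$-spectrum on its horizontal highest-weight components is contained in $h_{\bar\mu} + \mathbb{Z}_{\geq 0}$, where $\bar\mu$ runs over the weights occurring and $h_{\bar\mu} = \frac{(\bar\mu,\bar\mu + 2\bar\rho)}{2(k+h^{\vee})}$. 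Because each $\cV_d$ is a finite-dimensional $\gg$-module, only dominant integral $\bar\mu$ appear, and here the hypotheses on $\gg$ enter decisively: for a simple Lie algebra, and for $\mathfrak{osp}_{1|2n}$ (the unique basic classical Lie superalgebra all of whose odd roots are non-isotropic), the normalized form satisfies $(\bar\mu,\bar\mu+2\bar\rho) \in \mathbb{Q}_{>0}$ for every $\bar\mu \neq 0$.

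The decisive point is then purely numerical. A value $0$ can lie in $h_{\bar\mu} + \mathbb{Z}_{\geq 0}$ with $\bar\mu \neq 0$ only if $h_{\bar\mu} \in \mathbb{Z}_{\leq 0}$, i.e. $k + h^{\vee} = \frac{(\bar\mu,\bar\mu+2\bar\rho)}{2 h_{\bar\mu}} \in \mathbb{Q}_{\leq 0}$, which is excluded by hypothesis; and within the vacuum-type components ($\bar\mu = 0$) the operator $L^{\gg}_0$ acts semisimply by the level, with kernel exactly the $\gg$-invariant singular vectors, namely $\cC$. Hence the generalized zero-eigenspace equals $\ker L^{\gg}_0 = \cC$, the restricted form on $\cC$ is nondegenerate, and the simplicity of $\cC$ follows.

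I expect the main obstacle to be precisely this identification $\cC = \ker L^{\gg}_0$: one must control the $L^{\gg}_0$-spectrum on all highest-weight components at once, rule out Jordan blocks at $0$ outside the vacuum-type summands, and verify the positivity $(\bar\mu,\bar\mu+2\bar\rho) > 0$ in the $\mathfrak{osp}_{1|2n}$ case, where it is essential that no odd root is isotropic (this is exactly what fails for other basic Lie superalgebras and explains the restriction on $\gg$). The existence of the nondegenerate invariant form on $\cV$ is a secondary input; it is automatic once $\cV$ is self-contragredient, which holds in the free-field and simple-affine settings to which the theorem is applied.
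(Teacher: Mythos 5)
First, a point of context: the paper does not actually prove Theorem~\ref{cosetsimplicity}; it quotes it from the reference [ACK], which is listed as ``in preparation.'' So there is no in-paper proof to compare against, and your proposal has to be judged on its own merits. The spectral core of your plan is sound and is surely the heart of any proof: identifying $\cC := \text{Com}(\tilde{V}^k(\gg),\cV)$ with the kernel of the Sugawara operator $L^{\gg}_0$, locating the $L^{\gg}_0$-spectrum in $\bigcup_{\bar\mu}\,(h_{\bar\mu}+\mathbb{Z}_{\geq 0})$, and using $(\bar\mu,\bar\mu+2\bar\rho)\in\mathbb{Q}_{>0}$ for $\bar\mu\neq 0$ --- valid for simple $\gg$ and for $\mathfrak{osp}_{1|2n}$ precisely because its finite-dimensional representation theory is semisimple with positive Casimir on nontrivial irreducibles --- to show that the hypothesis $k+h^{\vee}\notin\mathbb{Q}_{\leq 0}$ excludes the eigenvalue $0$ outside the vacuum-type components. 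Your diagnosis of why other basic Lie superalgebras are excluded is also correct.

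The genuine gap is your first step. It is not a standard fact, and it is false in general, that a simple $\mathbb{N}$-graded vertex (super)algebra with one-dimensional weight-zero space carries a nondegenerate invariant bilinear form. What is standard is the converse-flavored statement: on a simple CFT-type algebra, any \emph{nonzero} invariant form is nondegenerate, because its radical is a proper ideal. Existence is a separate issue: by Li's theorem on invariant bilinear forms, the space of such forms is isomorphic to $(\cV_0/L_1\cV_1)^*$, so a nonzero form exists if and only if $L_1\cV_1=0$, equivalently if and only if $\cV$ is self-contragredient, and simplicity alone does not guarantee this. You concede the point in your last sentence by adding self-contragredience as a ``secondary input,'' but that is an extra hypothesis absent from the statement, so as written your argument proves a strictly weaker theorem (sufficient for the free-field applications in this paper, but not the theorem quoted). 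A second, smaller gap is that the identification $\cC=\ker L^{\gg}_0$ --- in particular the inclusion $\ker L^{\gg}_0\subseteq \cV^{\gg[t]}$ and the absence of Jordan blocks at eigenvalue $0$ --- is flagged as ``the main obstacle'' but never carried out; the phrase that $L^{\gg}_0$ ``acts semisimply by the level'' on vacuum-type components does not parse and conceals exactly what must be proved (local finiteness of the $\gg[t]$-action and semisimplicity of the resulting finite-dimensional $\gg$-modules, which also silently uses finite-dimensionality of the graded pieces $\cV_d$, itself not among the stated hypotheses).

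Both gaps can be closed simultaneously by replacing the bilinear-form argument with a projection argument, which is the natural route to the theorem as stated. Once one knows that $\cC$ is the generalized $L^{\gg}_0$-eigenspace for eigenvalue $0$ and that every nonzero generalized eigenvalue $\lambda$ satisfies $\lambda+\mathbb{Z}_{\geq 0}\not\ni 0$ (this is exactly where $k+h^{\vee}\notin\mathbb{Q}_{\leq 0}$ enters), the commutator formula $[L^{\gg}_0,a_{(n)}]=(L^{\gg}_0 a)_{(n)}+(L^{\gg}_{-1}a)_{(n+1)}$ shows that the spectral projection $\pi:\cV\to\cC$ satisfies $\pi(a_{(n)}v)=\pi(a)_{(n)}v$ for all $a\in\cV$, $v\in\cC$. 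Then if $I\subseteq\cC$ is a nonzero ideal, simplicity of $\cV$ gives $\mathbf{1}=\sum_i (a_i)_{(n_i)}v_i$ with $v_i\in I$, and applying $\pi$ yields $\mathbf{1}=\sum_i \pi(a_i)_{(n_i)}v_i\in I$, so $I=\cC$. This requires no invariant form and no self-contragredience, and it is the kind of argument the theorem's generality demands.
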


\begin{cor} \label{simplicityembeddings} Let $m,n,r$ be positive integers.

\begin{enumerate}
\item For $n\geq 2$, $m\neq r$, and $-m + r +n  > 0$, the image of the map $V^n (\gs\gl_{r|m})  \rightarrow \cS(nm) \otimes \cE(nr)$ is the simple affine vertex superalgebra $L_n (\gs\gl_{r|m})$.
\item For $-\frac{m}{2} + r +n+1 > 0$, the image of the map $V^{n}(\go\gs\gp_{m|2r}) \rightarrow \cS(nm) \otimes \cE(2nr)$ is the simple affine vertex superalgebra
$L_n(\go\gs\gp_{m|2r})$.
\end{enumerate}
\end{cor}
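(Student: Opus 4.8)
The plan is to derive both statements from Theorem~\ref{cosetsimplicity} by taking $\cV$ to be the relevant free field algebra, invoking the coset identifications of \cite{LS1} to recognize the commutant as the affine image, and then using the elementary fact that a simple homomorphic image of $V^n(\gg)$ must coincide with the simple quotient $L_n(\gg)$. In each case $\cV$ is a tensor product of a $\beta\gamma$-system and a $bc$-system, hence a simple vertex superalgebra, nonnegatively graded by conformal weight with one-dimensional weight-zero space spanned by the vacuum. The only hypothesis of Theorem~\ref{cosetsimplicity} that warrants a remark is that this grading is a priori by $\frac{1}{2}\mathbb{Z}_{\geq 0}$ rather than $\mathbb{Z}_{\geq 0}$; I would first confirm that the argument of \cite{ACK} is insensitive to this half-integrality (or rescale so that it is).

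Statement~(2) is the cleaner case, since $\gs\gp_{2n}$ is a simple Lie algebra. Here $\cV = \cS(nm)\otimes\cE(2nr)$ and the embedding is \eqref{s2}, which realizes a homomorphic image $\tilde V^{-\frac{m}{2}+r}(\gs\gp_{2n})$ of $V^{-\frac{m}{2}+r}(\gs\gp_{2n})$. With $h^\vee = n+1$ for $\gs\gp_{2n}$, the level satisfies $k+h^\vee = -\frac{m}{2}+r+n+1$, which is positive by hypothesis and in particular not in $\mathbb{Q}_{\leq 0}$. Theorem~\ref{cosetsimplicity} then gives that $\text{Com}(\tilde V^{-\frac{m}{2}+r}(\gs\gp_{2n}),\cV)$ is simple. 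By \cite[Thm.~6.3]{LS1} this commutant is exactly $\tilde V^n(\go\gs\gp_{m|2r})$, the image of $V^n(\go\gs\gp_{m|2r})$ under \eqref{s2}; being simple and a homomorphic image of the universal algebra, it must equal $L_n(\go\gs\gp_{m|2r})$.

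Statement~(1) requires more care because $\gg\gl_n$ is not simple. I would write $\gg\gl_n = \gs\gl_n \oplus \mathbb{C} I$ and apply Theorem~\ref{cosetsimplicity} to the simple factor $\gs\gl_n$; the hypothesis $n \geq 2$ is precisely what guarantees that $\gs\gl_n \neq 0$ is simple, with $h^\vee = n$. Under \eqref{s1} the induced level of $\gs\gl_n$ is $-m+r$ (the $\beta\gamma$-system contributing $-m$ and the $bc$-system $+r$), so $k+h^\vee = -m+r+n > 0$ by hypothesis, and hence $\cC := \text{Com}(\tilde V^{-m+r}(\gs\gl_n),\,\cS(nm)\otimes\cE(nr))$ is simple. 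The Heisenberg field $H$ coming from $I$ lies in $\cC$ and centralizes the image $\tilde V^n(\gs\gl_{r|m})$; since $m \neq r$ the corresponding central direction of $\gg\gl_{r|m}$ splits off orthogonally, and I would argue that $\cC \cong \tilde V^n(\gs\gl_{r|m}) \otimes H$. As $H$ is simple, simplicity of $\cC$ would then force $\tilde V^n(\gs\gl_{r|m})$ to be simple, hence equal to $L_n(\gs\gl_{r|m})$. The main obstacle is exactly this tensor-factor cancellation: \cite[Thm.~5.6]{LS1} supplies only the $\gg\gl_n$-commutant $\text{Com}(\tilde V^{-m+r}(\gg\gl_n),\cV) = \tilde V^n(\gs\gl_{r|m})$, so one must upgrade this to the $\gs\gl_n$-commutant $\cC$ and then show that removing the Heisenberg $H$ from the simple algebra $\cC$ leaves a simple algebra—equivalently, that $\cC$ decomposes into Fock modules for $H$ with $\cC^0 = H \otimes \text{Com}(H,\cC)$, whence $\text{Com}(H,\cC) = \text{Com}(\tilde V^{-m+r}(\gg\gl_n),\cV) = \tilde V^n(\gs\gl_{r|m})$ is simple.
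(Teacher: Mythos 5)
Your proposal follows essentially the same route as the paper: statement (2) is argued verbatim as in the paper, and for statement (1) the paper likewise applies Theorem \ref{cosetsimplicity} to the simple factor $\gs\gl_n$ and then strips off the rank-one Heisenberg algebra $\cH$ coming from the center of $\gg\gl_n$, using $m\neq r$ exactly to ensure that $\tilde{V}^{-m+r}(\gg\gl_n) \cong \tilde{V}^{-m+r}(\gs\gl_n)\otimes \cH$ with $\cH$ nondegenerate. The step you identify as ``the main obstacle'' --- that simplicity of $\cC = \text{Com}(\tilde{V}^{-m+r}(\gs\gl_n), \cS(nm)\otimes\cE(nr))$ implies simplicity of $\text{Com}(\cH,\cC) = \text{Com}(\tilde{V}^{-m+r}(\gg\gl_n), \cS(nm)\otimes\cE(nr)) \cong \tilde{V}^n(\gs\gl_{r|m})$ --- is precisely what the paper handles by citation, namely \cite[Prop. 3.2]{CKLR} on Heisenberg cosets of simple vertex algebras, and your sketch (Fock decomposition, factorization of the charge-zero component) is the content of that result. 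One caution: your intermediate claim $\cC \cong \tilde{V}^n(\gs\gl_{r|m})\otimes \cH$ is false in general, since the $\gs\gl_n$-coset contains determinant-type fields of nonzero Heisenberg charge ($SL_n$-invariants that are not $GL_n$-invariants), so $\cC$ is typically a nontrivial extension of $\cH \otimes \tilde{V}^n(\gs\gl_{r|m})$; only your refined statement about the charge-zero sector is correct, and it is all that is needed. Your remark about the $\frac{1}{2}\mathbb{Z}_{\geq 0}$-grading of $\cS(nm)\otimes\cE(nr)$ versus the $\mathbb{N}$-grading hypothesis in Theorem \ref{cosetsimplicity} is a fair point of care that the paper passes over silently.
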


\begin{proof}  
For the first statement, recall that
$\text{Com}(\tilde{V}^{-m+r}(\gg\gl_n), \cS(nm) \otimes \cE(nr)) \cong \tilde{V}^n(\gs\gl_{r|m})$. Applying Theorem \ref{cosetsimplicity}, the condition that $n\geq 2$ and  $-m + r +n  > 0$, implies that $$\text{Com}(\tilde{V}^{-m+r}(\gs\gl_n) ,  \cS(nm) \otimes \cE(nr)),$$ is simple. We have 
$\tilde{V}^{-m+r}(\gg\gl_n) \cong \tilde{V}^{-m+r}(\gs\gl_n) \otimes \cH$ where $\cH$ is a rank one Heisenberg algebra whenever $ m \neq r$. The simplicity of 
$$\text{Com}(\tilde{V}^{-m+r}(\gg\gl_n) ,  \cS(nm) \otimes \cE(nr)) = \text{Com}(\cH, \text{Com}(\tilde{V}^{-m+r}(\gs\gl_n) ,  \cS(nm) \otimes \cE(nr))) \cong \tilde{V}^n(\gs\gl_{r|m}),$$ then follows from \cite[Prop. 3.2]{CKLR}.

For the second statement, since $\text{Com}(\tilde{V}^{-\frac{m}{2}+r}(\gs\gp_{2n}),  \cS(nm) \otimes \cE(2nr)) \cong \tilde{V}^{n}(\go\gs\gp_{m|2r})$, Theorem \ref{cosetsimplicity} again implies that $ \tilde{V}^{n}(\go\gs\gp_{m|2r})$ is simple when $-\frac{m}{2} + r +n+1 > 0$.
\end{proof}

\section{Main result}

\begin{thm} \label{classfree} For all positive integers $m,n,r$, $\tilde{V}^n(\go\gs\gp_{m|2r})$ is classically free.
\end{thm}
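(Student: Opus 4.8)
The plan is to deduce the result from the invariant theory of arc spaces developed in \cite{LS1}, exactly along the lines of the proof that $L_k(\gs\gp_{2n})$ is classically free. Write $\cF = \cS(nm)\otimes\cE(2nr)$ and let $V = V_{\bar 0}\oplus V_{\bar 1}$ be the super vector space spanned by the free generators $\beta^a,\gamma^a,b^a,c^a$, so that $Sp_{2n}$ acts linearly on $V$ through the zero modes of the image of $V^{-\frac{m}{2}+r}(\gs\gp_{2n})$ under \eqref{s2}. Since $\cF$ is freely generated it is classically free, and $\text{gr}^F\cF$ is the differential supercommutative polynomial algebra on the generators, which I identify with the arc-space coordinate ring $\mathbb{C}[J_\infty V]$. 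By the coset decomposition \cite[Thm. 6.3]{LS1}, $\cV = \tilde V^n(\go\gs\gp_{m|2r})$ is the commutant of $\tilde V^{-\frac{m}{2}+r}(\gs\gp_{2n})$ in $\cF$.

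Next I would record how the finite group $Sp_{2n}$ acts on $\text{gr}^F\cF$. Because each current zero mode $X^\xi_{(0)}$ commutes with the translation operator $\partial=L_{-1}$ and acts on the weight-$\tfrac12$ generators through the defining representation on $V$, it acts on $\mathbb{C}[J_\infty V]$ by the derivation that extends the linear $Sp_{2n}$-action diagonally across every jet level; that is, $Sp_{2n}$ acts on $J_\infty V$ through its constant (jet-zero) action. The currents generating $\cV$ lie in the commutant of $\tilde V^{-\frac{m}{2}+r}(\gs\gp_{2n})$, so their symbols are $Sp_{2n}$-invariant quadratics in $\mathbb{C}[V]$, and hence $\text{gr}^F\cV$ is the differential subalgebra of $\mathbb{C}[J_\infty V]^{Sp_{2n}}$ generated by these quadratic invariants. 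Here one must verify that the canonical filtration on $\cV$ is the restriction of the one on $\cF$, so that $\text{gr}^F\cV$ genuinely embeds into $\text{gr}^F\cF$.

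The heart of the argument is the arc-space first and second fundamental theorems for $\gs\gp_{2n}$ established in \cite{LS1}, the very results used there to prove the coset decomposition. The arc-space FFT says that $\mathbb{C}[J_\infty V]^{Sp_{2n}}$ is generated as a differential algebra by the prolongations of the classical quadratic generators; since $\text{gr}^F\cV$ is the differential algebra generated by precisely those quadratics, this forces
\[
\text{gr}^F\cV = \mathbb{C}[J_\infty V]^{Sp_{2n}}.
\]
Taking the weight-zero part gives $R_\cV = \mathbb{C}[V]^{Sp_{2n}}$, so $X_\cV = V /\!/ Sp_{2n}$ and $(R_\cV)_\infty$ is the coordinate ring of $J_\infty(V /\!/ Sp_{2n})$. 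Under these identifications the canonical surjection $\Phi$ of \eqref{eq:map} becomes the natural comparison map
\[
\big(\mathbb{C}[V]^{Sp_{2n}}\big)_\infty \longrightarrow \mathbb{C}[J_\infty V]^{Sp_{2n}},
\]
and classical freeness is exactly the assertion that this map is injective.

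The main obstacle is this injectivity, and it is precisely the content of the arc-space second fundamental theorem: one must show that every relation among the invariants in $\mathbb{C}[J_\infty V]^{Sp_{2n}}$ is a differential consequence (a prolongation) of Weyl's classical SFT relations among the quadratic invariants, so that no relations beyond those already present in $(\mathbb{C}[V]^{Sp_{2n}})_\infty$ can occur. This is the deep input carried by \cite{LS1}; granting it, $\Phi$ is an isomorphism and $\tilde V^n(\go\gs\gp_{m|2r})$ is classically free for all positive integers $m,n,r$, with no genericity hypothesis on the level. Finally, combining with Corollary \ref{simplicityembeddings}, whenever $-\tfrac{m}{2}+r+n+1>0$ the algebra $\tilde V^n(\go\gs\gp_{m|2r})$ coincides with the simple quotient $L_n(\go\gs\gp_{m|2r})$, which is therefore classically free, as announced in the abstract.
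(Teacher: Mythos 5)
Your overall strategy --- realizing $\tilde{V}^n(\go\gs\gp_{m|2r})$ as the coset inside $\cF=\cS(nm)\otimes\cE(2nr)$ and invoking the arc-space fundamental theorems for the symplectic group from \cite{LS1,LS2} --- is the same as the paper's, but there is a genuine error at the heart of your execution: you work throughout with invariants of the \emph{constant} group $Sp_{2n}$, acting diagonally across all jet levels via the current zero modes, whereas both the coset and the fundamental theorems involve the \emph{jet group} $J_{\infty}(Sp_{2n})$, equivalently the current Lie algebra $\gs\gp_{2n}[t]$. The commutant $\text{Com}(\tilde{V}^{-\frac{m}{2}+r}(\gs\gp_{2n}),\cF)$ is cut out by \emph{all} modes $X^{\xi}_{(k)}$, $k\geq 0$, not just the zero modes, so it equals $\cF^{\gs\gp_{2n}[t]}$; and the arc-space FFT (\cite[Thm. 3.1 (1)]{LS1}, \cite[Thm. 1.4]{LS2}) says that $\mathbb{C}[J_\infty V]^{J_{\infty}(Sp_{2n})}$ --- not $\mathbb{C}[J_\infty V]^{Sp_{2n}}$ --- is differentially generated by the level-zero quadratic contractions. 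The statement you attribute to the FFT is false: the constant-group invariants form a strictly larger algebra, since the classical FFT applied to the infinitely many copies of $\mathbb{C}^{2n}$ sitting at all jet levels produces every individual cross-level pairing $\omega(x^{(i)},y^{(j)})$, while the differential algebra generated by level-zero pairings contains only the symmetrized combinations $\partial^k\omega(x^{(0)},y^{(0)})=\sum_{i+j=k}\binom{k}{i}\omega(x^{(i)},y^{(j)})$; already at weight one, $\omega(x^{(1)},y^{(0)})$ is $Sp_{2n}$-invariant but is not a differential consequence of the level-zero quadratics. Consequently your claimed identification $\text{gr}^F\cV = \mathbb{C}[J_\infty V]^{Sp_{2n}}$ cannot hold (the right-hand side has strictly larger graded dimension than $\cV$), and the comparison map you end with is not the map \eqref{eq:map} whose bijectivity defines classical freeness. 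The correct target, as in the paper, is $\big( \text{Sym}\big(\bigoplus_{j\geq 0}V_j\big)\bigotimes \bigwedge\big(\bigoplus_{j\geq 0}U_j\big)\big)^{J_{\infty}(Sp_{2n})}$, to which \cite[Thm. 3.1]{LS1} and \cite[Thm. 1.4]{LS2} genuinely apply.

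Second, the step you flag in passing --- \lq\lq one must verify that the canonical filtration on $\cV$ is the restriction of the one on $\cF$" --- is not a routine check; it is the crux of the whole proof. The paper handles it by comparing three filtrations on the coset: Li's canonical decreasing filtration $F^{\bullet}$, the standard good increasing filtration $G_{\bullet}$, and the filtration $G^f_{\bullet}$ induced by the embedding $f$ of \eqref{s2}. One first proves that the natural injection $\text{gr}_f(\cF^{\gs\gp_{2n}[t]})\hookrightarrow (\text{gr}_G\cF)^{\gs\gp_{2n}[t]}$ is surjective (this uses the arc-space FFT together with the observation that the quadratic invariants lift to the quadratic fields generating the coset, which lie in $\cF^{\gs\gp_{2n}[t]}$), and then invokes \cite[Lem. 2.2]{LS1} to conclude that all three associated graded algebras coincide. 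The paper stresses that this compatibility can genuinely fail: for the analogous type $A$ embedding of $\tilde{V}^n(\gs\gl_{r|m})$ into a free field algebra, the induced filtration differs from Li's canonical one, which is exactly why the same method does not settle classical freeness of $L_n(\gs\gl_r)$. So this step cannot be left unverified; without it, and without correcting the group from $Sp_{2n}$ to $J_{\infty}(Sp_{2n})$, the proposal does not constitute a proof.
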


\begin{proof} Let $$f:  \tilde{V}^{n}(\go\gs\gp_{m|2r}) \rightarrow \cS(nm) \otimes \cE(2nr)$$ be the map given by \eqref{s2}, and recall that 
$$\tilde{V}^{n}(\go\gs\gp_{m|2r}) \cong \text{Com}(\tilde{V}^{-\frac{m}{2}+r}(\gs\gp_{2n}),  \cS(nm) \otimes \cE(2nr)) = (\cS(nm) \otimes \cE(2nr))^{\gs\gp_{2n}[t]}.$$
Next, recall that $\cS(nm) \otimes \cE(2nr)$ has a good increasing filtration $G_{\bullet} (\cS(nm) \otimes \cE(2nr))$, where the space $G_{\frac{p}{2}} (\cS(nm) \otimes \cE(2nr))$ of degree at most $\frac{p}{2}$ is spanned by all monomials in the generators $\beta, \gamma, b,c$ and their derivatives, of length at most $p$. Also, $\tilde{V}^{n}(\go\gs\gp_{m|2r})$ has a good increasing filtration $G_{\bullet}( \tilde{V}^{n}(\go\gs\gp_{m|2r}))$, where the space $G_{p} (\tilde{V}^{n}(\go\gs\gp_{m|2r}))$ of degree at most $p$ is spanned by all monomials in the generators and their derivatives, of length at most $p$. Then $f$ preserves the filtration, that is, $f(G_p (\tilde{V}^{n}(\go\gs\gp_{m|2r}) )\subseteq G_p( \cS(nm) \otimes \cE(2nr))$; this is because the generators of $\tilde{V}^{n}(\go\gs\gp_{m|2r})$ are quadratics in the generators $\beta, \gamma, b,c$ of $\cS(nm) \otimes \cE(2nr)$. We have the induced good increasing filtration $G^{f}_{\bullet} ( \tilde{V}^{n}(\go\gs\gp_{m|2r}))$ given by  
\begin{equation} \label{compatiblefilt} G^{f}_{p}(\tilde{V}^{n}(\go\gs\gp_{m|2r}))= \{ \alpha \in \tilde{V}^{n}(\go\gs\gp_{m|2r})|\  f (\alpha) \in G_p (\cS(nm) \otimes \cE(2nr))\}.\end{equation} We denote by $\text{gr}_f( \tilde{V}^{n}(\go\gs\gp_{m|2r})))$ and $\text{gr}_G( \tilde{V}^{n}(\go\gs\gp_{m|2r})))$ the associated graded algebras with respect to these filtrations.

As in \cite{LS1}, we have the induced injective map 
\begin{equation} \label{surjectivity} \begin{split} & \text{gr}_f(\cS(nm) \otimes \cE(2nr))^{\gs\gp_{2n}[t]}) \hookrightarrow (\text{gr}_G(\cS(nm) \otimes \cE(2nr)))^{\gs\gp_{2n}[t]} 
\\ & \cong \big( \text{Sym}  \big(\bigoplus_{j\geq 0}V_j  \big) \bigotimes \bigwedge \big(\bigoplus_{j\geq 0} U_j\big)\big)^{J_{\infty}(Sp_{2n})},\end{split} \end{equation} where $V_j$ is isomorphic to $(\mathbb{C}^{2n})^{\oplus m}$ and $U_j \cong (\mathbb{C}^{2n})^{\oplus 2r}$, as $Sp_{2n}$-modules for all $j\geq 0$. We claim that \eqref{surjectivity} is surjective, and hence an isomorphism. To see this, note that the right hand side $\big(\text{Sym}  \big(\bigoplus_{j\geq 0}V_j  \big) \bigotimes \bigwedge \big(\bigoplus_{j\geq 0} U_j\big)\big)^{J_{\infty}(Sp_{2n})}$ is generated as a differential algebra by the subspace $\big((\text{Sym} \ V_0)  \bigotimes (\bigwedge U_0) \big)^{Sp_{2n}}$, by \cite[Thm. 3.1 (1)]{LS1} together with \cite[Thm. 1.4]{LS2}. Moreover, $\big((\text{Sym} \ V_0)  \bigotimes (\bigwedge U_0) \big)^{Sp_{2n}}$ is generated by quadratics corresponding to the pairing between two copies of the standard representation $\mathbb{C}^{2n}$ appearing in either $V_0$ or $U_0$, which must be distinct if both appear in $V_0$. Regarded as elements of $$\text{Sym}  \big(\bigoplus_{j\geq 0}V_j  \big) \bigotimes \bigwedge \big(\bigoplus_{j\geq 0} U_j\big) \cong \text{gr}_G(\cS(nm) \otimes \cE(2nr)),$$ these quadratics lift to quadratic fields in $\cS(nm) \otimes \cE(2nr)$; we simply replace the associative product by the normally ordered product. In fact, these quadratic fields are precisely the generators of $\tilde{V}^{n}(\go\gs\gp_{m|2r})$; this is the key step in the proof of \cite[Thm. 6.3]{LS1} which is omitted for brevity but is the same as the proof of  \cite[Thm. 4.4]{LS1}. But since $\tilde{V}^{n}(\go\gs\gp_{m|2r})$ commutes with $\tilde{V}^{-\frac{m}{2}+r}(\gs\gp_{2n})$, the above quadratic fields actually lie in $(\cS(nm) \otimes \cE(2nr))^{\gs\gp_{2n}[t]}$. Therefore the generators of $\big((\text{Sym} \ V_0)  \bigotimes (\bigwedge U_0) \big)^{Sp_{2n}}$ are the images under \eqref{surjectivity} of elements of $\text{gr}_f(\cS(nm) \otimes \cE(2nr))^{\gs\gp_{2n}[t]})$, so  \eqref{surjectivity} is surjective.

The hypotheses of \cite[Lem. 2.2]{LS1} are then satisfied, so we obtain
\begin{equation} \label{compatiblefilt} \text{gr}_f(\cS(nm) \otimes \cE(2nr))^{\gs\gp_{2n}[t]})  \cong \text{gr}_G(\cS(nm) \otimes \cE(2nr))^{\gs\gp_{2n}[t]}) \cong \text{gr}^F(\cS(nm) \otimes \cE(2nr))^{\gs\gp_{2n}[t]}). \end{equation} Here $\text{gr}^F(\cS(nm) \otimes \cE(2nr))^{\gs\gp_{2n}[t]})$ denotes the associated graded algebra with respect to Li's canonical decreasing filtration. It follows that we have an isomorphism of differential algebras
$$\text{gr}^F(\tilde{V}^n(\go\gs\gp_{m|2r})) \cong \text{gr}^F((\cS(nm) \otimes \cE(2nr))^{\gs\gp_{2n}[t]}) \cong \bigg( \text{Sym}  \big(\bigoplus_{j\geq 0}V_j  \big) \bigotimes \bigwedge \big(\bigoplus_{j\geq 0} U_j\big)\bigg)^{J_{\infty}(Sp_{2n})}.$$
Finally, since both $U_j$ and $V_j$ are isomorphic to sums of copies of the standard representation $\mathbb{C}^{2n}$ of $Sp_{2n}$, \cite[Thm. 3.1 (2)]{LS1} and \cite[Thm. 1.4]{LS2} imply that all differential algebraic relations among the generators of $\big( \text{Sym}  \big(\bigoplus_{j\geq 0}V_j  \big) \bigotimes \bigwedge \big(\bigoplus_{j\geq 0} U_j\big)\big)^{J_{\infty}(Sp_{2n})}$ and their derivatives, are consequences of the relations in $ \big( (\text{Sym} \ V_0)  \bigotimes (\bigwedge U_0) \big)^{Sp_{2n}}$ and their derivatives. The same statement then holds in $\text{gr}^F(\tilde{V}^n(\go\gs\gp_{m|2r}))$. Equivalently, $\tilde{V}^n(\go\gs\gp_{m|2r})$ is classically free.
\end{proof}

\begin{cor}  For $-\frac{m}{2} + r +n+1 > 0$, the simple affine vertex superalgebra $L_n(\go\gs\gp_{m|2r})$ is classically free. In particular, $L_n(\go\gs\gp_{1|2r})$ is classically free for all positive integers $n,r$.
\end{cor}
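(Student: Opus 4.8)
The plan is to obtain the statement by combining Theorem \ref{classfree} with Corollary \ref{simplicityembeddings}(2), since the substantive work has already been carried out there. First I would recall that $\tilde{V}^n(\go\gs\gp_{m|2r})$ was, by construction, the image of the map $V^{n}(\go\gs\gp_{m|2r}) \rightarrow \cS(nm) \otimes \cE(2nr)$ furnished by \eqref{s2}. Theorem \ref{classfree} asserts that this image is classically free for \emph{all} positive integers $m,n,r$, with no restriction on the level; this is the input that supplies the arc-space invariant theory and the identification of $\text{gr}^F$.

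Next I would invoke Corollary \ref{simplicityembeddings}(2): under the hypothesis $-\frac{m}{2} + r + n + 1 > 0$, this same image is precisely the simple affine vertex superalgebra $L_n(\go\gs\gp_{m|2r})$. Thus, in this range, we have an isomorphism $L_n(\go\gs\gp_{m|2r}) \cong \tilde{V}^n(\go\gs\gp_{m|2r})$. Since classical freeness is a property of the canonically attached objects $R_{\cV}$ and $\text{gr}^F(\cV)$ together with the map $\Phi$ of \eqref{eq:map}, all of which are functorial with respect to isomorphisms of vertex superalgebras, the classical freeness of $\tilde{V}^n(\go\gs\gp_{m|2r})$ transfers at once to $L_n(\go\gs\gp_{m|2r})$.

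For the final clause I would specialize to $m=1$, so that the inequality reads $-\frac{1}{2} + r + n + 1 = r + n + \frac{1}{2} > 0$, which holds automatically for all positive integers $n,r$; hence $L_n(\go\gs\gp_{1|2r})$ is classically free unconditionally. Because the two decisive ingredients are already established, there is no real obstacle in this argument. The one point deserving care is verifying that the two descriptions of the algebra---as the image of \eqref{s2} and as $L_n(\go\gs\gp_{m|2r})$---genuinely coincide in the asserted range, which is exactly what Corollary \ref{simplicityembeddings}(2) provides.
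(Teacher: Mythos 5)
Your proposal is correct and follows exactly the paper's own argument: the corollary is immediate from Theorem \ref{classfree} together with Corollary \ref{simplicityembeddings}(2), which identifies $\tilde{V}^n(\go\gs\gp_{m|2r})$ with $L_n(\go\gs\gp_{m|2r})$ in the stated range. The extra remarks you add (invariance of classical freeness under isomorphism, and checking the inequality for $m=1$) are sound elaborations of the same one-line deduction.
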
 

\begin{proof} This is immediate from Corollary \ref{simplicityembeddings} and Theorem \ref{classfree}, since $\tilde{V}^n(\go\gs\gp_{m|2r}) = L_n(\go\gs\gp_{m|2r})$ in this case.
\end{proof}
In view of \cite[Thm. 5.5]{AL} and \cite[Thm. 7.1]{CL}, the algebras $L_n(\go\gs\gp_{1|2r})$ are new examples of lisse and rational vertex superalgebras which are classically free. 

The key ingredient in the proof of Theorem \ref{classfree} is the compatibility of the three filtrations on $\tilde{V}^{n}(\go\gs\gp_{m|2r})$, which is expressed by \eqref{compatiblefilt}. One can ask whether the classical freeness of $\tilde{V}^n (\gs\gl_{r|m})$ and $\tilde{V}^n(\gs\gl_r) = L_n(\gs\gl_r)$, which is just the case $m=0$, can be proved in a similar way. Unfortunately, this does not work because the filtration $G^f_{\bullet}(\tilde{V}^n (\gs\gl_{r|m}))$ on $\tilde{V}^n (\gs\gl_{r|m})$ induced by the embedding $\tilde{V}^n (\gs\gl_{r|m}) \hookrightarrow \cS(nm) \otimes \cS(nr)$, is not the same as Li's canonical increasing filtration $G_{\bullet}(\tilde{V}^n (\gs\gl_{r|m}))$. It is still expected that $ L_n(\gs\gl_r)$ is classically free for all positive integers $n,r$, but a different approach will be needed to prove this.

\end{document}